\theoremstyle{plain}
\newtheorem{maintheorem}{Theorem}
\newtheorem{maincorollary}{Corollary}
\newtheorem{mainproposition}{Proposition}
\newtheorem{theorem}{Theorem }[section]
\newtheorem{lemma}[theorem]{Lemma}
\theoremstyle{definition} \theoremstyle{remark}
\newcommand{\diam}{\operatorname{diam}}
\newcommand{\al} {\alpha}       
\newcommand{\ga} {\gamma}    
\newcommand{\de} {\delta}       
\newcommand{\vep}{\varepsilon}
\newcommand{\la} {\lambda}      \newcommand{\La}{\Lambda}
\newcommand{\un}{\underbar}
\newcommand{\cR}{\mathcal{R}}
\newcommand{\cQ}{\mathcal{Q}}
\newcommand{\cQn}{\cQ^{(n)}}
\newcommand{\ov}{\overline}
\begin{document}
\title{Entropy and Poincar\'e recurrence from a geometrical viewpoint}
\author{Paulo Varandas}
\address{Departamento de Matem\'atica, Universidade Federal da Bahia\\
Av. Ademar de Barros s/n, 40170-110 Salvador, Brazil.}
\email{paulo.varandas@ufba.br}
\date{\today}

\keywords{Poincar\'e recurrence, dynamical balls, entropy, Lyapunov
exponents, dimension theory}

\subjclass[2000]{Primary: 37B20, 37A35, 37C45; Secondary: 37D}

 \maketitle

\begin{abstract}
We study Poincar\'e recurrence from a purely geometrical viewpoint.
In \cite{DW04} it was proven that the metric entropy is given by the
exponential growth rate of return times to dynamical balls. Here we
use combinatorial arguments to provide an alternative and more
direct proof of this result and to prove that minimal return times
to dynamical balls grow linearly with respect to its length. Some
relations using weighted versions of recurrence times are also
obtained for equilibrium states. Then we establish some interesting
relations between recurrence, dimension, entropy and Lyapunov
exponents of ergodic measures.
\end{abstract}

\section{Introduction}

Since it was introduced in Dynamical Systems more than fifty years
ago, entropy has become an important ingredient in the
characterization of the complexity of dynamical systems in both
topological and measure theoretical senses. From the measure
theoretical viewpoint the metric entropy of invariant measures
turned out to be a surprisingly universal concept in ergodic theory
since it appears in the study of different subjects as information
theory, Poincar\'e recurrence, and in the analysis of either local
or global complexities. Just as an illustration of its universal
nature, metric entropy is characterized as the exponential growth
rate of: the measure of decreasing partition elements and dynamical
balls (see e.g.~\cite{Man87} for the Shannon-McMillan-Breiman's
theorem and ~\cite{BK83}), the number of dynamical balls and
partition elements necessary to cover a relevant part of the phase
space (see e.g.~\cite{Ka80}), and the recurrence rate to elements of
a given partition (see e.g.~\cite{OW93}). We refer the reader
to~\cite{Ka07} for a very complete survey on the notion of entropy
in dynamical systems.

A particularly interesting and deep connection is the one
established between metric entropy and Poincar\'e recurrence. Given
a measurable dynamical system $f$, it follows by pioneering work of
Poincar\'e that the set of recurrent points has full probability.
This means that the iterates of almost every point (with respect to
an arbitrary invariant probability measure $\mu$) will return
arbitrarily close to itself. In particular, for any positive measure
set $A$ the function
$$
R_A(x)=\inf\{k \geq 1 :  f^k(x) \in A\}
$$
is finite almost everywhere in $A$. Given a decreasing sequence of
partitions $U_n$ it is natural to look for a limiting behavior of
the return times $R_{U_n}$ in finer scales. Such a limiting behavior
turned out to exist for ergodic stationary processes and it
coincides with the metric entropy of the system. More precisely,
Ornstein and Weiss~\cite{OW93} proved that the entropy
$h_\mu(f,\cQ)$ of an ergodic measure $\mu$ with respect to a
partition $\cQ$ is given by the (almost everywhere) well defined
limit
\begin{equation}\label{eq.Ornstein.Weiss}
h_\mu(f,\cQ)
    =\lim_{n\to \infty} \frac1n \log R_n(x,\cQ),
\end{equation}
where $R_n(x,\cQ)=\inf\{k \geq 1 : f^k(x) \in \cQn(x)\}$ is the
\emph{$n$th return time} (with respect to the partition $\cQ$),
$\cQ^{(n)}=\bigvee_{j=0}^{n-1} f^{-j}\cQ$ is the dynamically
generated partition, and $\cQ^{(n)}(x)$ denotes the element of
$\cQ^{(n)}$ that contains the point $x$. Consequently, the metric
entropy is the supremum of the exponential growth rates of
Poincar\'e recurrences over all possible choices of partitions.
Moreover, when return times are weighted with respect to some
potential we can recover estimates concerning the pressure. Some
interesting formulas concerning pressure and return times were also
obtained in~\cite{MSUZ03}.

Also very important is the notion of minimal return times that we
now describe. By Poincar\'e recurrence theorem, for every invariant
probability measure $\mu$ the \emph{minimal return time} $S(A)$ to
any positive measure set $A$ defined by
$$
S(A)=\inf\{k \geq 1 :  f^{-k}(A) \cap A \neq \emptyset\}
$$
is finite. Heuristically it is natural to expect the limiting
behavior of $S(U_n)$ in finer scales $U_n$, if it exists, to be
simpler than the the one presented by regular return times
$R_{U_n}$. In fact, Afraimovich, Chazottes, Saussol~\cite{ACS03}
proved that maps with a specification property satisfy
$$
\lim_{n\to \infty} \frac{S_n(x,\cQ)}{n}=1
    \quad \text{for $\mu$-almost every}\; x
$$
for every ergodic measure $\mu$ (provided that $h_\mu(f,\cQ)>0$),
where $S_n(x,\cQ)$ denotes the \emph{$n$th minimal return time} to
the partition element $\cQn(x)$. So, contrary to the exponential
growth presented by regular return times, minimal return times grow
linearly with $n$, i.e. the time needed for a cylinder to return to
itself is asymptotically given by its size.

To the best of our knowledge, the class of dynamical systems for
which return time statistics are studied are mostly those that
present some (finite or countable) reference partition with some
Markovian property or such that the bounded distortion property
holds. We refer the reader to ~\cite{CG93,Pac00,BV03,BT07b,Va08b}
just to quote some recent contributions. However, the existence of
such partitions constitutes itself a problem even in a context of
non-uniform hyperbolicity. We refer the reader to \cite{Pi07} for a
recent important contribution on the construction of such partitions
for nonuniformly expanding maps.

So, we turn our attention to return times to purely geometrical
objects as (regular and dynamically defined) balls. In fact, not
only regular and dynamically defined balls arise naturally in a
non-uniformly hyperbolic context as the study of Poincar\'e
recurrence to these purely geometrical objects encloses much
information about invariant measures. Given an invariant measure
$\mu$ the upper and lower pointwise dimensions $\ov d_\mu(x)$ and
$\un d_\mu(x)$ are defined by the limits
\begin{equation*}\label{def.pointwise.dimensions}
\ov d_\mu(x)
    = \limsup_{r\to 0} \frac{\log \mu(B(x,r))}{\log r}
        \quad\text{and}\quad
\underline d_\mu(x)
    = \liminf_{r\to 0} \frac{\log \mu(B(x,r))}{\log r}.
\end{equation*}
Its Hausdorff dimension $\dim_H(\mu)$, defined as the infimum of the
Hausdorff dimension of sets of full $\mu$-measure, satisfies
$\underline d_\mu(x) \le \dim_H(\mu) \le \ov d_\mu(x)$ (see e.g.
\cite{Pe97}). Barreira, Pesin, Schmeling~\cite{BPS99} proved that
any hyperbolic and ergodic measure $\mu$ of a $C^{1+\alpha}$
diffeomorphism is exact dimensional, i.e. the upper and lower
pointwise dimensions $\ov d_\mu(x)$ and $\un d_\mu(x)$ do exist and
coincide almost everywhere. By~\cite{You82} the limit is equal to
$\dim_H(\mu)$.
Moreover, Barreira, Saussol~\cite{BaSa01} proved that the pointwise
recurrence rates
\begin{equation*}\label{def.pointwise.recurrences}
\ov R(x)
    = \limsup_{\substack{r\to 0}} \frac{\log R_{B(x,r)}(x)}{-\log r}
        \quad\text{and}\quad
\underline R(x)
    = \liminf_{\substack{r\to 0}} \frac{\log R_{B(x,r)}(x)}{-\log r}
\end{equation*}
satisfy $\ov R(x) \le \ov d_\mu(x)$ and $\underline R(x) \le
\underline d_\mu(x)$ in general, and conjectured that any
$C^{1+\al}$ diffeomorphism $f$ and any hyperbolic ergodic measure
$\mu$ should satisfy
\begin{equation}\label{eq.conjBS}
\underline R(x)=\ov R(x)=\dim_H(\mu),
    \quad \text{$\mu$-almost everywhere.}
\end{equation}
We note that \eqref{eq.conjBS} was proved to hold for dynamical
systems that either present some \emph{hyperbolicity} (e.g. Axiom A
diffeomorphisms ~\cite{BaSa01} and piecewise monotone interval maps
whose derivative has $p$-variation~\cite{STV02}) or that satisfy a
\emph{rapidly mixing property} as in~\cite{Sau06}.
In~\cite{STV02,STV03} the minimal recurrence rates
\begin{equation*}\label{def.local.recurrences}
\ov S(x)
    = \limsup_{\substack{r\to 0}} \frac{S(B(x,r))}{-\log r}
        \quad\text{and}\quad
\underline S(x)
    = \liminf_{\substack{r\to 0}} \frac{S(B(x,r))}{-\log r}
\end{equation*}
are studied. In the case of endomorphisms it is shown that, if $\mu$
is a positive entropy ergodic measure and $\la_\mu,\La_\mu>0$ denote
respectively the smallest and the largest Lyapunov exponents of
$(f,\mu)$ then $\underline S(x) \ge 1/\La_\mu$ and, under some
specification property, that $\ov S(x) \le 1/\la_\mu$. In
particular, a wide family of piecewise monotone interval maps with
$p$-variation satisfy
\begin{equation}\label{eq.STV}
\underline S(x)=\ov S(x)=1/\la_\mu
    \quad \text{for $\mu$-almost every $x$},
\end{equation}
where $\la_\mu$ denotes the Lyapunov exponent of $\mu$.

Our purpose is to study return times to more natural topological
objects than partitions. Indeed, we characterize the metric entropy
as the exponential growth rate of return times to dynamical balls
and show that minimal return times to dynamical balls grow linearly
with respect to its length. These constitute geometrical
counterparts to some results in \cite{OW93} and \cite{ACS03}.
Afterwards these results are used to establish some new results
relating recurrence, dimension and Lyapunov exponents.
Although our first result appeared previously in \cite{DW04} as a
consequence of a generalization of Shannon-McMillan-Breiman's
theorem, computing the exponential decreasing rate of the measure of
partition elements determined when a point enters a given set, we
believe that the combinatorial arguments used here can be applied in
some different contexts as in the study of hitting time statistics
and fluctuations of return times. In fact, one expect the
fluctuations of the return times in Theorem~\ref{Thm} to be
log-normal with respect to any measure satisfying a weak Gibbs
property as in~\cite{Yu99,Va08b,VV1}. Using~\cite{Sau01} this is the
case provided exponential hitting time statistics. However, to the
best of our knowledge, there are no known examples where exponential
return time statistics to dynamical balls has been obtained for
(multidimensional) dynamical systems outside of the uniformly
hyperbolic setting.
It seems that the combinatorial arguments used here can be useful to
recover exponential return time statistics for dynamical balls from
the corresponding result for elements of some relevant partition. In
particular, this should apply to the non-uniformly hyperbolic maps
considered in~\cite{OV08,Va08b}.

This paper is organized as follows. In Section~\ref{s.statement} we
present the main result. Some definitions and preliminaries are
presented in Section~\ref{s.preliminaries}. In
Section~\ref{s.recurrence} we study regular and return times to
dynamical balls. The proofs of Theorems~\ref{Thm} and~\ref{thm} are
given in Subsections~\ref{s.Proof} and~\ref{s.proof} respectively.
Finally, in Section~\ref{s.applications} we apply the previous
results to study dimension of ergodic measures and prove
Proposition~\ref{p.application1} and Theorem~\ref{thm.application}.

\section{Statement of the main results}\label{s.statement}

In this section we introduce some necessary definitions and state
our main results. Throughout, assume that $X$ is a compact metric
space and let $f: X \to X$ be a continuous transformation. Given
$\vep>0$ and $n\ge 1$ the \emph{dynamical ball} $B(x,n,\vep)$ is the
set
 $
 B(x,n,\vep)
    =\{y \in X : d(f^j(x),f^j(y)) < \vep \; \text{for every} \; 0 \leq j \leq n-1 \}.
 $
We define the \emph{$n$th return time $R_n(x,\vep)$} to the
dynamical ball $B(x,n,\vep)$ by
$$
R_n(x,\vep)
    =\inf\{k \geq 1 : f^k(x) \in B(x,n,\vep)\}.
$$
We recall the following result that follows from more general result
in \cite{DW04}:

\begin{maintheorem}\label{Thm}
Let $\mu$ be an ergodic $f$-invariant probability measure. The
limits
$$
\overline h(f,x)
    =\lim_{\vep\to 0} \limsup_{n \to \infty} \frac1n \log R_n(x,\vep)
\quad\text{and}\quad \underline h(f,x)
    =\lim_{\vep\to 0} \liminf_{n \to \infty} \frac1n \log R_n(x,\vep)
$$
exist for $\mu$-almost every $x$ and coincide with the metric
entropy $h_\mu(f)$.
\end{maintheorem}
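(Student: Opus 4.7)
The plan is to derive the theorem from the Ornstein--Weiss formula~\eqref{eq.Ornstein.Weiss} by sandwiching the dynamical ball $B(x,n,\vep)$ between cylinders of finite measurable partitions whose entropies approach $h_\mu(f)$. This reduces the result to two one-sided comparisons $R_n(x,\vep)\le R_n(x,\cQ)$ and $R_n(x,\vep)\ge R_n(x,\cP)$ (up to subexponential factors) for suitable $\cQ,\cP$.

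\emph{Upper bound.} First I would fix $\vep>0$ and pick a finite measurable partition $\cQ$ of $X$ with $\diam(\cQ)<\vep$. If $y\in\cQ^{(n)}(x)$ then $f^j(x)$ and $f^j(y)$ lie in the same atom of $\cQ$ for $0\le j\le n-1$, hence $d(f^j(x),f^j(y))<\vep$; this gives $\cQ^{(n)}(x)\subseteq B(x,n,\vep)$ and therefore $R_n(x,\vep)\le R_n(x,\cQ)$. Invoking~\eqref{eq.Ornstein.Weiss} applied to $\cQ$,
$$
\limsup_{n\to\infty}\frac{1}{n}\log R_n(x,\vep)\;\le\;h_\mu(f,\cQ)\;\le\;h_\mu(f)
$$
for $\mu$-a.e.\ $x$, and sending $\vep\to 0$ along a countable sequence yields $\overline h(f,x)\le h_\mu(f)$.

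\emph{Lower bound.} Fix $\eta>0$ and choose a finite partition $\cP$ with $\mu(\partial\cP)=0$ and $h_\mu(f,\cP)>h_\mu(f)-\eta$. Setting $\partial_\vep\cP=\{y\in X: d(y,\partial\cP)<\vep\}$, I would take $\vep>0$ so small that $\mu(\partial_\vep\cP)\log(\#\cP)<\eta$. For $\mu$-almost every $x$ and all large enough $n$, Birkhoff's theorem applied to the indicator of $\partial_\vep\cP$ bounds the number of iterates $0\le j<n$ with $f^j(x)\in\partial_\vep\cP$ by $\eta n/\log(\#\cP)$; at each of the remaining ``good'' iterates the $\vep$-ball around $f^j(x)$ sits inside a single $\cP$-atom, while at each ``bad'' iterate it meets at most $\#\cP$ atoms. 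Hence $B(x,n,\vep)$ is covered by at most $e^{\eta n}$ cylinders of $\cP^{(n)}$, each of $\mu$-measure at most $e^{-n(h_\mu(f,\cP)-\eta)}$ by Shannon--McMillan--Breiman. If $R_n(x,\vep)=k$ then $f^k(x)$ hits one of these cylinders; a Kac/Borel--Cantelli argument combined with~\eqref{eq.Ornstein.Weiss} applied cylinder by cylinder then forces $k\ge e^{n(h_\mu(f,\cP)-3\eta)}$ for $\mu$-a.e.\ $x$, so $\underline h(f,x)\ge h_\mu(f)-O(\eta)$, and $\eta\to 0$ concludes the lower bound.

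\emph{Main obstacle.} The delicate step is the lower bound, and specifically the combinatorial passage from a \emph{hitting} time to the union of $e^{o(n)}$ cylinders of $\cP^{(n)}$ to an almost sure lower bound of order $e^{nh_\mu(f,\cP)}$ on $R_n(x,\vep)$. A direct cylinder-by-cylinder use of~\eqref{eq.Ornstein.Weiss} is not enough, because the covering cylinders depend on $x$ and could, \emph{a priori}, conspire to produce much faster returns than any single cylinder would. I expect this is exactly where the combinatorial counting advertised in the introduction must enter, through a careful enumeration of $n$-orbits with prescribed $\cP$-itinerary together with the uniform bound on cylinder measures coming from Shannon--McMillan--Breiman.
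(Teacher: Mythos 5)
Your \emph{upper bound} is exactly the paper's: take $\cQ$ with $\mu(\partial\cQ)=0$ and $\diam(\cQ)<\vep$, observe $\cQ^{(n)}(x)\subseteq B(x,n,\vep)$ so $R_n(x,\vep)\le R_n(x,\cQ)$, then invoke Ornstein--Weiss. That half is complete. (One small side remark: the paper also verifies that $\overline h(x,\vep)$, $\underline h(x,\vep)$ are $f$-sub-invariant, so that they are a.e.\ constant by ergodicity and the two outer limits in $\vep$ exist by monotonicity; this is a small but necessary preliminary that your proposal omits.)

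Your \emph{lower bound}, however, has a genuine gap, and you have correctly put your finger on it yourself. The covering $B(x,n,\vep)\subset\bigcup_{i\le e^{\eta n}}Q^i$ with $Q^i\in\cP^{(n)}$ does control the \emph{measure} of the dynamical ball, but $R_n(x,\vep)$ is the hitting time of $x$ to that union, and the union is a set of $e^{o(n)}$ cylinders chosen depending on $x$; there is no cylinder-by-cylinder reduction to Ornstein--Weiss, nor does a Kac/Borel--Cantelli step applied to the measure of $B(x,n,\vep)$ immediately yield an almost-sure $e^{nh}$-type lower bound that holds \emph{simultaneously for all large $n$} (Kac controls expectations of returns to a fixed set, not the full sequence $R_n(x,\vep)$). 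So as written the lower-bound argument does not close.

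What the paper does instead is a proof by contradiction via a global combinatorial covering argument, which sidesteps the ``cylinders conspire'' problem entirely. Suppose $h_\mu(f,\cQ)>b>a>\underline h(f)$. Using Birkhoff and Lemma~\ref{l.prelim1}, for a large-measure set $A\cap B$ one has both slow recurrence to $\partial_\vep\cQ$ and the existence, for each $x$, of some $n\in[N_0,N_1]$ with $R_n(x,\vep)\le e^{an}$. One then forms a set $\Lambda$ of measure at least $1/2$ whose orbits spend a $(1-3\gamma)$ fraction of time in $A\cap B$, and for $x\in\Lambda$ one partitions $\{0,\dots,k\}$ into blocks: a trivial block of length one at bad times, and at good times a block of length $m\in[N_0,N_1]$ with $R_m(\cdot,\vep)\le e^{am}$. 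The key point is that on a good block of length $m$, the $\cQ$-itinerary is pinned (up to the $e^{\alpha m}$ choices from Lemma~\ref{l.prelim1}) by one of at most $e^{am}$ already-fixed itineraries further along the orbit, because the short return time forces the block to nearly repeat a later block. Counting blocks, binomial coefficients for the placement of bad blocks, and boundary corrections gives a cover of $\Lambda$ by at most $Ce^{bk}$ cylinders of $\cQ^{(k)}$, contradicting $h_\mu(f,\cQ)>b$ via the Katok-type characterization \eqref{eq.partition.number}. This block-counting is precisely the combinatorial content you anticipated would be needed; without it, the lower bound does not follow.
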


Let us comment on the assumption of ergodicity in the theorem above.
By ergodic decomposition every $f$-invariant probability measure
$\mu$ can be decomposed as a convex combination of ergodic measures
$\mu_x$, i.e.
 $
\mu= \int \mu_x \; d\mu(x).
 $
Moreover, since the metric entropy map is affine then $h_\mu(f)=
\int h_{\mu_x}(f) \; d\mu(x)$. So, applying Theorem~\ref{Thm} to
each ergodic component $\mu_x$ and integrating with respect to $\mu$
we obtain the following immediate consequence.

\begin{maincorollary}
If $\mu$ is an $f$-invariant probability measure then the limits
$\overline h(f,x)$ and $\underline h(f,x)$ defined above do exist
for $\mu$-almost every $x$. Moreover, the metric entropy $h_\mu(f)$
satisfies
$$
h_\mu(f)= \int \overline h(f,x) \; d\mu(x) = \int \underline h(f,x)
\; d\mu(x).
$$
\end{maincorollary}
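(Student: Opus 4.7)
The plan is to reduce the corollary to Theorem~\ref{Thm} via the ergodic decomposition, exactly along the lines indicated in the paragraph preceding the statement, with some care taken about the measurability and Fubini-type steps. Write $\mu = \int \mu_x\, d\mu(x)$, where for $\mu$-a.e.\ $x$ the component $\mu_x$ is ergodic and $x$ is a generic point for $\mu_x$ (in the sense that $x$ lies in the full $\mu_x$-measure set of points where the conclusions of Birkhoff-type theorems hold for $\mu_x$).

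First I would handle existence of the limits. Let $E$ be the Borel set of points at which either $\overline h(f,\cdot)$ or $\underline h(f,\cdot)$ fails to exist, i.e.\ where
$$
\lim_{\vep\to 0}\limsup_{n\to\infty}\frac1n\log R_n(x,\vep)
\neq \lim_{\vep\to 0}\liminf_{n\to\infty}\frac1n\log R_n(x,\vep)
$$
or one of the iterated limits does not exist. This set is measurable because it is built out of countable $\limsup$/$\liminf$ operations on the measurable functions $x\mapsto R_n(x,\vep)$. Theorem~\ref{Thm} applied to the ergodic measure $\mu_x$ gives $\mu_x(E)=0$ for $\mu$-a.e.\ $x$, and then the Fubini-type identity
$$
\mu(E)=\int \mu_x(E)\, d\mu(x)=0
$$
shows that the limits $\overline h(f,x)$ and $\underline h(f,x)$ exist for $\mu$-almost every $x$.

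Next I would identify the values of these limits. For $\mu$-a.e.\ $x$, Theorem~\ref{Thm} applied to $\mu_x$ yields that $\overline h(f,y)=\underline h(f,y)=h_{\mu_x}(f)$ for $\mu_x$-a.e.\ $y$; since $\mu$-a.e.\ $x$ is generic for its own ergodic component $\mu_x$, evaluating at $y=x$ gives
$$
\overline h(f,x)=\underline h(f,x)=h_{\mu_x}(f)\quad\text{for $\mu$-a.e.\ $x$.}
$$
Integrating this equality with respect to $\mu$ and invoking the affinity of the metric entropy map, $h_\mu(f)=\int h_{\mu_x}(f)\, d\mu(x)$, produces
$$
\int \overline h(f,x)\, d\mu(x)=\int\underline h(f,x)\, d\mu(x)=\int h_{\mu_x}(f)\, d\mu(x)=h_\mu(f),
$$
as desired.

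The only delicate point is the Fubini step, which depends on a measurable version of the ergodic decomposition so that $x\mapsto \mu_x(E)$ is $\mu$-measurable and so that almost every $x$ is a generic point of its component $\mu_x$; both are standard in the compact metric setting and follow from Rokhlin's disintegration theorem applied to the $\sigma$-algebra of $f$-invariant Borel sets. Measurability of $x\mapsto h_{\mu_x}(f)$, used to make sense of the integral, is likewise standard. With these ingredients in place, the argument is essentially the two displays above.
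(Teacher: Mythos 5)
Your proof follows the paper's argument exactly: ergodic decomposition, application of Theorem~\ref{Thm} to each ergodic component, and the affinity of the metric entropy map. The extra care you take with measurability and the Fubini step is a reasonable elaboration of the same idea, not a different route.
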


Given a continuous potential $\phi : X \to \mathbb R$ the
\emph{metric pressure} $P_\mu(f,\phi)=h_\mu(f)+\int \phi \;d\mu$ of
the invariant measure $\mu$ with respect to $f$ and $\phi$ can also
be written using weighted recurrence times. This is a consequence of
Birkhoff's ergodic theorem and Theorem~\ref{Thm} as we now explain.
Indeed, given an $f$-invariant and ergodic probability measure $\mu$
there exists a full measure set $\cR$ such that
$$
\lim_{n\to \infty} \frac1n S_n\phi(x)=\int \phi\,d\mu
    \quad\text{and}\quad
h_\mu(f)=
    \lim_{\vep\to 0} \limsup_{n \to \infty} \frac1n \log R_n(x,\vep)
$$
for every $x\in \cR$. Given any $\de>0$ and $x\in \cR$ it follows
from uniform continuity of $\phi$ the existence of $\vep_\de>0$ such
that $|S_n\phi(B(x,n,\vep))-S_n\phi(x)|< \de n$ for every $n \ge 1$
and every $0<\vep<\vep_\de$, where $S_n \phi(B(x,n,\vep))= \sup
\{\sum_{j=0}^{n-1} \phi(f^j(y)): y \in B(x,n,\vep)\}$. In
consequence,
$$
\left|
    \limsup_{n\to \infty} \left[\frac1n S_n\phi(B(x,n,\vep)) +\frac1n \log R_n(x,\vep)\right]
    -\left(h_\mu(f)+\int \phi\,d\mu \right)
\right|
    <2\de
$$
for every small $\vep>0$. Hence we deduced the following result,
relating the metric pressure with appropriate weighted return times.

\begin{maincorollary}
Let $\mu$ be an $f$-invariant and ergodic probability measure. Then
$$
P_\mu(f,\phi)
    =\lim_{\vep\to 0} \limsup_{n\to\infty}
        \frac1n \log \Big[ e^{S_n \phi(B(x,n,\vep))}R_n(x,\vep)\Big],
        \;\;\text{for $\mu$-a.e. $x$}.
$$
\end{maincorollary}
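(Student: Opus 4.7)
The plan is to recover the pressure decomposition $P_\mu(f,\phi)=h_\mu(f)+\int\phi\,d\mu$ by matching each summand to one factor inside the logarithm: Theorem~A controls the return-time factor $R_n(x,\vep)$ and produces the entropy, while Birkhoff's ergodic theorem controls the Birkhoff-sum factor and produces the integral. The only technical bridge is that the exponent features the supremum of $S_n\phi$ over the dynamical ball $B(x,n,\vep)$ rather than its value at $x$, and this is handled by uniform continuity of $\phi$.

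Concretely, I first fix an ergodic invariant measure $\mu$ and pick a full $\mu$-measure set $\cR$ on which, simultaneously, $\frac1n S_n\phi(x)\to\int\phi\,d\mu$ (Birkhoff) and $\lim_{\vep\to 0}\limsup_{n\to\infty}\frac1n\log R_n(x,\vep)=h_\mu(f)$ (Theorem~A). Given $\de>0$, the uniform continuity of $\phi$ on the compact space $X$ produces $\vep_\de>0$ with $|\phi(y)-\phi(z)|<\de$ whenever $d(y,z)<\vep_\de$. For $0<\vep<\vep_\de$, any $x\in X$, and any $y\in B(x,n,\vep)$, summing the pointwise bound $|\phi(f^j(y))-\phi(f^j(x))|<\de$ over $0\le j\le n-1$ gives $|S_n\phi(y)-S_n\phi(x)|<n\de$, and taking the supremum over $y$ yields
$$\big|S_n\phi(B(x,n,\vep))-S_n\phi(x)\big|\le n\de.$$

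Dividing by $n$ and combining with Birkhoff's theorem, both $\limsup_n \frac1n S_n\phi(B(x,n,\vep))$ and $\liminf_n \frac1n S_n\phi(B(x,n,\vep))$ lie within $\de$ of $\int\phi\,d\mu$ whenever $\vep<\vep_\de$. Splitting the logarithm of the product as $\frac1n S_n\phi(B(x,n,\vep))+\frac1n\log R_n(x,\vep)$ and invoking Theorem~A for the second summand, one obtains, for every $x\in\cR$ and every sufficiently small $\vep$,
$$\left|\limsup_{n\to\infty}\frac1n\log\!\Big[e^{S_n\phi(B(x,n,\vep))}R_n(x,\vep)\Big]-P_\mu(f,\phi)\right|<2\de.$$
Sending $\vep\to 0$ and then $\de\to 0$ concludes the argument.

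There is no serious obstacle here, since the substantive work is already packaged inside Theorem~A and Birkhoff's theorem; what remains is the standard oscillation-of-Birkhoff-sums estimate. The only subtlety is that $\limsup(a_n+b_n)$ does not in general split as a sum of $\limsup$s, but because $\frac1n S_n\phi(B(x,n,\vep))$ is confined to a $\de$-neighbourhood of the constant $\int\phi\,d\mu$ for large $n$, the error made when commuting $\limsup$ with the sum is bounded by $\de$ and is absorbed into the final $2\de$ estimate.
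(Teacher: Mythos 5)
Your proposal is correct and follows exactly the route the paper takes: combine Birkhoff's ergodic theorem for $\frac1n S_n\phi$, Theorem~\ref{Thm} for $\frac1n\log R_n(x,\vep)$, and uniform continuity of $\phi$ to replace $S_n\phi(B(x,n,\vep))$ by $S_n\phi(x)$ up to a $\de n$ error. Your extra remark about why the $\limsup$ of the sum can be controlled (one summand converges to a constant) is a useful clarification that the paper leaves implicit, but the argument is the same.
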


Now we turn our attention to minimal return times. We define the
\emph{$n$th minimal return time} $S_n(x,\vep)$ to the dynamical ball
$B(x,n,\vep)$ by
$$
S_n(x,\vep)
    =\inf \{k \ge 1 : f^{-k}(B(x,n,\vep)) \cap B(x,n,\vep)\neq \emptyset\}
$$
Clearly $S_n(x,\vep) \le R_n(x,\vep)$ and so these minimal return
times are finite in a set of total probability. Moreover, we will
prove that minimal return times $S_n$ to dynamical balls grow
linearly with $n$. First we recall a definition. We say that $f$
satisfies the \emph{specification property} if, given $\de>0$ there
is an integer $N \geq 1$ such that the following holds: for any
$k\ge 1$, any points $x_1, \dots, x_k$, any integers $0 = a_1\le
b_1<a_2 \le b_2 < \dots < a_k \le b_k$ satisfying
$a_{i+1}-b_{i}>N(\de)$ and any integer $p \ge b_k + N(\de)$ there
exists a point $x\in X$ such that $f^p(x)=x$ and $d(f^j(x),f^j(x_i))
<\de$ for every $a_i\leq j \leq b_i$ and $1 \le i \le k$. Our second
main result is as follows.

\begin{maintheorem}\label{thm}
Assume that $f$ has the specification property. If $\mu$ is an
$f$-invariant, ergodic measure such that $h_\mu(f)>0$, the limits
$$
\ov S(x)=\lim_{\vep\to 0}\limsup_{n\to \infty} \frac1n S_n(x,\vep)
    \quad\text{and}\quad
\un S(x)=\lim_{\vep\to 0}\liminf_{n\to \infty} \frac1n S_n(x,\vep)
$$
exist and are equal to one for $\mu$-almost every $x$.
\end{maintheorem}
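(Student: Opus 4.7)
The plan is to bound $\limsup_n S_n(x,\vep)/n$ above by $1$ using the specification property alone, and to bound $\liminf_n S_n(x,\vep)/n$ below by $1$ by reducing to a regular return-time problem and invoking Theorem~\ref{Thm}. Letting $\vep\to 0$ then yields $\un S(x)=\ov S(x)=1$ $\mu$-almost everywhere. The upper bound is essentially soft and requires neither ergodicity nor positive entropy, while those hypotheses enter crucially in the lower bound.

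For the upper bound, fix $\vep>0$ and let $N=N(\vep)$ be furnished by the specification property. Apply specification to two copies of the orbit segment $(x,f(x),\dots,f^{n-1}(x))$, placed in the time windows $[0,n-1]$ and $[n+N,2n+N-1]$. This produces a point $y\in X$ with $d(f^j(y),f^j(x))<\vep$ for $0\le j\le n-1$ and $d(f^{n+N+i}(y),f^i(x))<\vep$ for $0\le i\le n-1$. Thus $y\in B(x,n,\vep)$ and $f^{n+N}(y)\in B(x,n,\vep)$, so $S_n(x,\vep)\le n+N(\vep)$ and consequently $\limsup_n S_n(x,\vep)/n\le 1$ for every $x$ and every $\vep>0$.

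For the lower bound, suppose, toward a contradiction, that there exist $\eta>0$ and a set of positive $\mu$-measure of points $x$ for which $\liminf_n S_n(x,\vep)/n<1-\eta$ for arbitrarily small $\vep>0$. Fix such an $x$, a small $\vep$, and a sequence $n_j\to\infty$ with $k_j:=S_{n_j}(x,\vep)\le(1-\eta)n_j$; set $m_j=n_j-k_j\ge\eta n_j$. By definition there exists $y_j\in B(x,n_j,\vep)$ with $f^{k_j}(y_j)\in B(x,n_j,\vep)$, and the triangle-inequality estimate
\[
d(f^{k_j+i}(x),f^i(x))\le d(f^{k_j+i}(x),f^{k_j+i}(y_j))+d(f^{k_j+i}(y_j),f^i(x))<2\vep,
\]
valid for $0\le i<m_j$ since both $k_j+i$ and $i$ lie in $[0,n_j-1]$, shows $f^{k_j}(x)\in B(x,m_j,2\vep)$ and hence $R_{m_j}(x,2\vep)\le k_j\le n_j$. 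On the other hand, Theorem~\ref{Thm} applied at radius $2\vep$ guarantees, for $\vep$ small enough on a $\mu$-conull set, that $\liminf_m \tfrac1m\log R_m(x,2\vep)\ge h_\mu(f)/2>0$; since $m_j\to\infty$ this forces $R_{m_j}(x,2\vep)\ge e^{m_j h_\mu(f)/3}\ge e^{\eta h_\mu(f) n_j/3}$ for $j$ large, contradicting $R_{m_j}(x,2\vep)\le n_j$.

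The main technical point is the triangle-inequality step that converts a short minimal return for a dynamical ball of length $n_j$ into a short \emph{genuine} return of the orbit of $x$ to a dynamical ball of length $m_j=n_j-k_j$ (at the doubled radius $2\vep$); once this reduction is made, the exponential lower bound on regular return times provided by Theorem~\ref{Thm} is incompatible with the merely linear upper bound $n_j$, and the contradiction closes. Letting $\eta\to 0$ yields $\liminf_n S_n(x,\vep)/n\ge 1$ for every sufficiently small $\vep>0$ $\mu$-a.e.; combining with the upper bound and then sending $\vep\to 0$ gives $\un S(x)=\ov S(x)=1$ almost everywhere.
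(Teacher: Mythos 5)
Your upper bound is essentially the paper's: both invoke specification to place a shadowing orbit that realizes a return within time $n+N(\vep)$, yielding $S_n(x,\vep)\le n+N(\vep)$. (The paper shadows with a periodic point, you with two copies of the segment — same idea.)

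Your lower bound is a genuinely different and cleaner route. The paper proves $\un S(x)\ge 1$ by a Borel--Cantelli argument: it fixes a partition $\cQ$, applies Shannon--McMillan--Breiman and the covering lemma (Lemma 3.2) to bound the number of $n$-cylinders that can meet $\{x: S_n(x,\vep)\le\eta n\}$, shows the corresponding measures are summable, and concludes. You instead make the pointwise observation that if $y,f^{k}(y)\in B(x,n,\vep)$ with $k=S_n(x,\vep)$, then a triangle inequality over the overlapping window of length $m=n-k$ gives $f^{k}(x)\in B(x,m,2\vep)$, hence $R_m(x,2\vep)\le k\le n$. Since $m\ge\eta n$, the exponential lower bound $\liminf_m \frac1m\log R_m(x,2\vep)\ge h_\mu(f)/2>0$, which Theorem~\ref{Thm} supplies for $\vep$ small on a conull set (using monotonicity of $\underline h(\vep)$ in $\vep$ and positivity of $h_\mu(f)$), is incompatible with a merely linear bound, and the contradiction closes. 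This bypasses partitions, SMB, and the counting lemma entirely: once Theorem~\ref{Thm} is in hand, the lower bound for minimal return times falls out of the lower bound for regular return times by a purely geometric reduction. The paper's argument is more self-contained (not leaning on Theorem~\ref{Thm}), and its combinatorial machinery is reused elsewhere; yours is shorter and exhibits the conceptual link between $S_n$ and $R_n$ more transparently. One small presentational point: when you "fix such an $x$", make explicit that you choose it simultaneously in the positive-measure bad set and in the $\mu$-conull set where Theorem~\ref{Thm} holds for all small rational radii — you gesture at this with "on a $\mu$-conull set," but it is worth stating.
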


It is not hard to check that this result also holds true if $\mu$
satisfies the nonuniform specification property of \cite{STV03}.
However we shall not use or prove this fact.
The final part of this section is devoted to the discussion of the
relation between entropy, dimension and Lyapunov exponents.

\begin{mainproposition}\label{p.application1}
Assume that $f: X \to X$ is a continuous transformation and that
there exist constants $\de,\la,\La>0$ such that
 $
\la \, d(x,y) \leq d(f(x),f(y)) \leq \La \, d(x,y)
 $
for every $x,y \in X$ so that $d(x,y)<\de$. If $\mu$ is an
$f$-invariant ergodic probability measure with positive entropy then
$$
\frac{h_\mu(f)}{\log\La} \le \underline R(x)
    \quad\text{and}\quad
\ov R(x) \le \frac{h_\mu(f)}{\log\la},
$$
and $1/\log\La \le \underline S(x)$ for $\mu$-almost every $x$. If,
in addition, $f$ satisfies the specification property then $\ov S(x)
\le 1/\log\la$ for $\mu$-a.e. $x$.
\end{mainproposition}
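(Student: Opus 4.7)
The entire argument proceeds by sandwiching dynamical balls between regular balls of appropriately chosen radii using the iterated bi-Lipschitz estimates, and then applying Theorems~A and~B at the right scale. The fundamental observation is that, whenever the orbit stays inside the $\delta$-neighborhood where the Lipschitz bounds apply, one has the containments
\[
B(x,\varepsilon/\Lambda^{n-1}) \subseteq B(x,n,\varepsilon) \subseteq B(x,\varepsilon/\lambda^{n-1}),
\]
obtained by iterating $d(f^j(x),f^j(y)) \le \Lambda^j d(x,y)$ and $d(x,y) \le \lambda^{-j} d(f^j(x),f^j(y))$ respectively.

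For the bounds on $\underline R$ and $\overline R$: given a small radius $r$, I choose $n = n(r)$ so that either $B(x,r) \subseteq B(x,n,\varepsilon)$ (taking $n \approx \log(\varepsilon/r)/\log\Lambda$) or $B(x,n,\varepsilon) \subseteq B(x,r)$ (taking $n \approx \log(\varepsilon/r)/\log\lambda$, which is meaningful when $\lambda > 1$). Since $A \subseteq B$ yields $R_B(x) \le R_A(x)$ and $S(B) \le S(A)$, the first inclusion gives $R_n(x,\varepsilon) \le R_{B(x,r)}(x)$ and the second $R_{B(x,r)}(x) \le R_n(x,\varepsilon)$. Dividing by $-\log r$, which is asymptotic to $n\log\Lambda$ or $n\log\lambda$ respectively, applying Theorem~A, and letting $\varepsilon \to 0$ yields $h_\mu(f)/\log\Lambda \le \underline R(x)$ and $\overline R(x) \le h_\mu(f)/\log\lambda$. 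The bound $\overline S(x) \le 1/\log\lambda$ under the specification assumption is entirely analogous: the inclusion $B(x,n,\varepsilon) \subseteq B(x,r)$ yields $S(B(x,r)) \le S_n(x,\varepsilon)$, and Theorem~B supplies $\limsup_n S_n(x,\varepsilon)/n = 1$ almost everywhere.

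The delicate step is $\underline S(x) \ge 1/\log\Lambda$, asserted \emph{without} the specification hypothesis. Again the inclusion $B(x,r) \subseteq B(x,n,\varepsilon)$ gives $S_n(x,\varepsilon) \le S(B(x,r))$, so it suffices to show $\liminf_n S_n(x,\varepsilon)/n \ge 1$ for $\mu$-a.e.\ $x$ and $\varepsilon$ small, using only the Lipschitz structure and ergodicity. The key reduction is that $S_n(x,\varepsilon) = k$ with $k < n$ entails the existence of $y \in B(x,n,\varepsilon)$ with $f^k(y) \in B(x,n,\varepsilon)$, and then two applications of the triangle inequality force $d(f^j(x),f^{k+j}(x)) < 2\varepsilon$ for every $0 \le j \le n-1-k$; thus the orbit segment of $x$ of length $n$ is $2\varepsilon$-shadowed by its $k$-shift over $n-k$ consecutive steps. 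Ruling out such long near-periodicity for $\mu$-typical $x$ --- for instance via a Borel--Cantelli argument exploiting that the shadow condition forces $f^k(x) \in B(x,n-k,2\varepsilon)$, whose measure is controlled by the Brin--Katok estimate $\mu(B(\cdot,n-k,2\varepsilon)) \approx e^{-h_\mu(n-k)}$ when $h_\mu > 0$ --- is the main obstacle, and delivers the required linear lower bound on $S_n(x,\varepsilon)$.
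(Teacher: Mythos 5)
Your sandwiching $B(x,\varepsilon\Lambda^{-(n-1)}) \subseteq B(x,n,\varepsilon) \subseteq B(x,\varepsilon\lambda^{-(n-1)})$ and the resulting deduction of the bounds on $\underline R$, $\overline R$, and $\overline S$ from Theorems~A and~B, by matching $-\log r$ with $n\log\Lambda$ or $n\log\lambda$ along the appropriate sequence of radii, is exactly the paper's argument.

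The delicate point you flag is real: $\underline S(x)\ge 1/\log\Lambda$ is asserted without specification, whereas Theorem~B as stated assumes it. But the independent argument you then sketch has a genuine gap. You reduce $S_n(x,\varepsilon)=k$ to the consequence $f^k(x)\in B(x,n-k,2\varepsilon)$ and then invoke a Brin--Katok estimate $\mu\big(B(\cdot,n-k,2\varepsilon)\big)\approx e^{-h_\mu(n-k)}$ to run Borel--Cantelli. However, the event $\{x : f^k(x)\in B(x,n-k,2\varepsilon)\}$ has the dynamical ball centered at the \emph{variable} point $x$ itself; Brin--Katok controls the measure of a ball around a fixed center and does not by itself bound the measure of this self-referential set. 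Turning this into a summable estimate requires replacing the moving ball by a cover of cylinder sets from a fixed partition and counting the admissible combinatorics, handling the dependence between $x$ and the set it must land in. Your sketch leaves this decoupling step entirely unaddressed, and it is precisely the content of the paper's proof of the lower bound in Theorem~B.

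The shorter route --- and what the paper intends, even though its own proof text somewhat carelessly says that both $S$-bounds follow ``using the specification property and Theorem~B'' --- is to notice that in the proof of Theorem~B the specification hypothesis is used only for the \emph{upper} bound $\limsup_n S_n(x,\varepsilon)/n\le 1$ (via shadowing by periodic orbits). The lower bound $\liminf_n S_n(x,\varepsilon)/n\ge 1$ is established there using only Birkhoff, Shannon--McMillan--Breiman, the combinatorial covering lemma, and Borel--Cantelli, with no specification hypothesis. Citing that part of the argument rather than the statement of Theorem~B gives $\underline S(x)\ge 1/\log\Lambda$ directly and closes the gap without new work.
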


If $f$ is a linear, conformal expanding tori endomorphism it
satisfies the specification property and there exists $\la>1$ so
that $d(f(x),f(y))=\la d(x,y)$ for every close $x,y\in X$. Moreover,
if $\mu$ is an ergodic measure its Lyapunov exponent is
$\la_\mu=\log\la$. Using that $\dim_H(\mu)=h_\mu(f)/\la_\mu$ (see
e.g.~\cite{You82}) we obtain:

\begin{maincorollary}
Let $f: \mathbb T^n \to \mathbb T^n$ be a linear, conformal
expanding tori endomorphism. If $\mu$ is a positive entropy ergodic
$f$-invariant probability measure then \eqref{eq.conjBS} and
\eqref{eq.STV} hold.
\end{maincorollary}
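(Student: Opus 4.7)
The plan is to show that this corollary is essentially an immediate consequence of Proposition~\ref{p.application1} once we check that the hypotheses apply and identify all the numerical quantities in terms of the single expansion constant $\la$.

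First I would observe that, since $f$ is linear, conformal and expanding, there exists $\la>1$ such that $d(f(x),f(y))=\la\,d(x,y)$ for every $x,y\in\mathbb T^n$ with $d(x,y)$ smaller than some $\de>0$. In particular the hypotheses of Proposition~\ref{p.application1} are satisfied with $\la=\La$. I would also note that linear expanding endomorphisms of the torus are topologically conjugate to a full one-sided shift on a finite alphabet (via a Markov partition by cubes) and therefore satisfy the specification property, so the upper bound for $\ov S(x)$ in Proposition~\ref{p.application1} also applies. Finally, since the derivative of $f$ acts as a conformal dilation by $\la$, any ergodic invariant measure has a single Lyapunov exponent equal to $\la_\mu=\log\la$.

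Next I would plug these values into Proposition~\ref{p.application1}. Because $\la=\La$, the chain of inequalities
$$
\frac{h_\mu(f)}{\log\la}\le \un R(x)\le \ov R(x)\le \frac{h_\mu(f)}{\log\la}
$$
collapses to equalities $\mu$-almost everywhere. Similarly, using the additional conclusion of Proposition~\ref{p.application1} that relies on specification,
$$
\frac{1}{\log\la}\le \un S(x)\le \ov S(x)\le \frac{1}{\log\la}
$$
also collapses to equalities for $\mu$-a.e.\ $x$. This gives $\un R(x)=\ov R(x)=h_\mu(f)/\log\la$ and $\un S(x)=\ov S(x)=1/\log\la=1/\la_\mu$.

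To finish, I would invoke the classical dimension formula of Young~\cite{You82}, which in this conformal expanding setting gives $\dim_H(\mu)=h_\mu(f)/\la_\mu=h_\mu(f)/\log\la$. Substituting this into the first chain yields $\un R(x)=\ov R(x)=\dim_H(\mu)$, proving \eqref{eq.conjBS}, while the second chain is exactly \eqref{eq.STV}. There is no real obstacle here: the entire argument is essentially bookkeeping, with the only nontrivial inputs (specification for linear expanding torus maps and the equality $\dim_H(\mu)=h_\mu(f)/\la_\mu$) being standard facts that can be quoted.
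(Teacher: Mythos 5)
Your proof is essentially identical to the paper's: check that the hypotheses of Proposition~\ref{p.application1} hold with $\la=\La$, note that $\la_\mu=\log\la$, invoke the specification property, apply the proposition so that the two-sided inequalities collapse to equalities, and finish with Young's formula $\dim_H(\mu)=h_\mu(f)/\la_\mu$. One small inaccuracy in a side remark: a linear expanding torus endomorphism is \emph{not} topologically conjugate to a full one-sided shift (the torus is connected, a shift space is totally disconnected) but only a factor of one; however, specification does hold (it is a topologically mixing expanding map), so your argument is unaffected.
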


The following result is an asymptotic version of
Proposition~\ref{p.application1} above for differentiable
endomorphisms.

\begin{maintheorem}\label{thm.application}
Assume that $f: M \to M$ is a $C^{1+\al}$ endomorphism, $\mu$ is a
positive entropy $f$-invariant, ergodic probability measure and that
$0<\la_1\leq \la_2 \leq \dots \leq \la_d$ are the Lyapunov exponents
of $(f,\mu)$. Then $h_\mu(f)/\la_d \le \underline R(x)$,
$h_\mu(f)/\la_1 \ge \overline R(x)$ and $1/\la_d \le \underline
S(x)$ for $\mu$-almost every $x$. If, in addition, $f$ satisfies the
specification property then $\ov S(x) \le 1/\la_1$ for $\mu$-almost
every $x$.
\end{maintheorem}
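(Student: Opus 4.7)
The plan is to exploit non-uniform hyperbolicity theory (Pesin/Oseledets) in order to compare the dynamical ball $B(x,n,\vep)$ with ordinary metric balls $B(x,r)$ up to subexponential factors, and then read off the four asserted inequalities from Theorems~\ref{Thm} and~\ref{thm}. In other words, Theorem~\ref{thm.application} should follow from Proposition~\ref{p.application1} with the uniform constants $\la,\La$ replaced by $\la_1-\delta$ and $\la_d+\delta$ after restricting to Pesin-regular points.

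Step 1 (geometric comparison). Using Oseledets' theorem together with tempered distortion in the $C^{1+\al}$ setting, I would first show that for $\mu$-almost every $x$ and every $\delta>0$ there exist $\vep_0(x,\delta)>0$ and a function $K_n(x,\delta)>0$ with $\tfrac1n \log K_n \to 0$, such that for every $0<\vep<\vep_0$ and all sufficiently large $n$,
\[
B\!\left(x,\tfrac{\vep}{K_n}\,e^{-n(\la_d+\delta)}\right) \;\subset\; B(x,n,\vep) \;\subset\; B\!\left(x,\vep K_n\,e^{-n(\la_1-\delta)}\right).
\]
The left inclusion expresses the upper bound $\|Df^j(y)\|\le K_n\,e^{j(\la_d+\delta)}$ along the orbit of $x$ in a Pesin/Lyapunov chart; the right inclusion expresses the matching lower bound $\|Df^j(y)^{-1}\|^{-1}\ge K_n^{-1}\,e^{j(\la_1-\delta)}$, applied to the preimage branch of $f^j$ selected by the orbit. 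The hypothesis that all Lyapunov exponents be positive is what allows both inclusions simultaneously.

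Step 2 (translation of the recurrence estimates). With Step~1 in hand, given small $r>0$ choose $n=n(r,\vep,\delta)$ maximal with $r\le \tfrac{\vep}{K_n}\,e^{-n(\la_d+\delta)}$. Then $B(x,r)\subset B(x,n,\vep)$, so by monotonicity $R_{B(x,r)}(x)\ge R_n(x,\vep)$ and $S(B(x,r))\ge S_n(x,\vep)$. Since $K_n$ is subexponential, $-\log r = n(\la_d+\delta)(1+o(1))$ as $n\to\infty$; dividing by $-\log r$, taking $\liminf$ as $r\to 0$, then $\vep\to 0$, and then $\delta\to 0$, Theorem~\ref{Thm} yields $\underline R(x)\ge h_\mu(f)/\la_d$ and Theorem~\ref{thm} yields $\underline S(x)\ge 1/\la_d$. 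The symmetric choice, taking $n$ minimal with $\vep K_n\,e^{-n(\la_1-\delta)}\le r$ and using the other inclusion $B(x,n,\vep)\subset B(x,r)$, gives $\ov R(x)\le h_\mu(f)/\la_1$; under the specification assumption the same chain, fed with the $\limsup$ statement of Theorem~\ref{thm}, produces $\ov S(x)\le 1/\la_1$.

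The main obstacle is Step~1 in the non-uniform setting: one needs both $\|Df^n\|$ and $\|(Df^n)^{-1}\|$ controlled along the orbit of $x$ uniformly up to subexponential factors, and one must pass from these linearized estimates at the level of Pesin charts to honest inclusions between metric balls. This is precisely where the $C^{1+\al}$ hypothesis enters, through tempered distortion, ensuring that $K_n$ grows subexponentially so that it is absorbed when dividing by $n$ (equivalently, by $-\log r$) in the limit. Once Step~1 is established, Step~2 is essentially a rescaling exercise identical in structure to the argument implicit in Proposition~\ref{p.application1}.
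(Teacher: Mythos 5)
Your proposal matches the paper's argument: the paper also works in Pesin's local charts to sandwich the dynamical ball $B(x,n,\vep)$ between metric balls of radii comparable to $e^{-n(\la_d+3\eta)}$ and $e^{-n(\la_1-3\eta)}$ (with the tempered subexponential correction absorbed into the $3\eta$ in the exponent rather than carried as a separate factor $K_n$), and then feeds these inclusions into Theorems~\ref{Thm} and~\ref{thm} by exactly the rescaling you describe in Step~2. The role of $C^{1+\alpha}$ regularity (to build tempered Lyapunov charts) and of the positivity of $\la_1$ (to obtain the inclusion $B(x,n,\vep)\subset B(x,r)$ along the inverse branch) is identified in the same way.
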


Since topologically mixing continuous interval maps satisfy the
specification property then we get:

\begin{maincorollary}
Let $f: I \to I$ be a topologically mixing $C^{1+\al}$ interval map
and assume that $\mu$ is a positive entropy, ergodic, $f$-invariant
probability measure. Then \eqref{eq.conjBS} and \eqref{eq.STV} hold.
\end{maincorollary}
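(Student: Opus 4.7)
The plan is to reduce this corollary to Theorem~\ref{thm.application} by verifying its hypotheses in the one-dimensional setting and then invoking a dimension formula to convert the Lyapunov bounds into bounds involving $\dim_H(\mu)$.

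First, I would recall Blokh's classical result that every topologically mixing continuous interval map satisfies the specification property. This is precisely the hypothesis needed to apply the minimal-return-time part of Theorem~\ref{thm.application}. Next, since $f$ is $C^{1+\al}$ and $h_\mu(f)>0$, the Ruelle inequality $h_\mu(f)\le \la_\mu:=\int \log|f'|\,d\mu$ guarantees that the unique Lyapunov exponent $\la_\mu$ is strictly positive, so the exponents $0<\la_1=\la_d=\la_\mu$ of Theorem~\ref{thm.application} are both equal to $\la_\mu$.

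Second, I would apply Theorem~\ref{thm.application} directly. With $d=1$ the pinching $\la_1=\la_d$ collapses the bounds into equalities: $\mu$-a.e.
\[
\un R(x)=\ov R(x)=\frac{h_\mu(f)}{\la_\mu},
\qquad
\un S(x)=\ov S(x)=\frac{1}{\la_\mu}.
\]
This already gives \eqref{eq.STV} for free. To obtain \eqref{eq.conjBS}, I would invoke the Ledrappier--Hofbauer--Raith formula: any ergodic, positive-entropy, $f$-invariant measure of a $C^{1+\al}$ interval map is exact dimensional and satisfies $\dim_H(\mu)=h_\mu(f)/\la_\mu$. Plugging this identity into the displayed equalities for $\un R$ and $\ov R$ yields \eqref{eq.conjBS}.

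I do not anticipate genuine difficulty here, since the corollary is essentially a packaging statement: the heavy lifting has already been done in Theorem~\ref{thm.application}. The only external inputs are Blokh's specification theorem (to activate the $\ov S(x)\le 1/\la_1$ half of Theorem~\ref{thm.application}) and the one-dimensional Ledrappier dimension formula (to translate the Lyapunov bound into the dimension bound required by \eqref{eq.conjBS}). The main thing to be careful about is making sure that $\la_\mu$ is well-defined and positive under the sole assumption $h_\mu(f)>0$, which is guaranteed by Ruelle's inequality as noted above.
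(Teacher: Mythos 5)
Your proposal is correct and follows the same route as the paper: the corollary is read off from Theorem~\ref{thm.application} once one knows that topologically mixing interval maps satisfy specification. You helpfully make explicit two steps the paper leaves implicit—using Ruelle's inequality to get $\la_\mu>0$ from $h_\mu(f)>0$ (needed to invoke Theorem~\ref{thm.application}), and the one-dimensional dimension formula $\dim_H(\mu)=h_\mu(f)/\la_\mu$ (needed to pass from $\un R=\ov R=h_\mu(f)/\la_\mu$ to \eqref{eq.conjBS}); just note that Ruelle's inequality reads $h_\mu(f)\le\max(0,\la_\mu)$ rather than $h_\mu(f)\le\la_\mu$, though your conclusion is unaffected.
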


\section{Preliminaries}\label{s.preliminaries}

\subsection{Metric entropy}

We recall some characterizations of metric entropy. The first one is
due to Katok~\cite[Theorem I.I]{Ka80}. Given $0<c<1$, denote by
$N(n,\vep,c)$ the minimum number of dynamical balls necessary to
cover a set of measure $c$. Indeed, if $\mu$ is ergodic Katok proved
that for every $c\in(0,1)$
\begin{equation}\label{eq.Katok}
h_\mu(f)
    =\lim_{\vep\to 0} \liminf_{n \to \infty} \frac1n \log N(n,\vep,c)
    =\lim_{\vep\to 0} \limsup_{n \to \infty} \frac1n \log N(n,\vep,c).
\end{equation}
Using Shannon-McMillan-Breiman's theorem and arguments analogous to
the ones used in the proof of \eqref{eq.Katok} it is straightforward
to check the following property.

\begin{lemma} Let $\cQ$ be a partition on $X$ and $c\in(0,1)$ be given. Then
\begin{equation}\label{eq.partition.number}
h_\mu(f,\cQ)
    =\lim_{\vep\to 0} \liminf_{n \to \infty} \frac1n \log N(n,\cQ,c)
    =\lim_{\vep\to 0} \limsup_{n \to \infty} \frac1n \log N(n,\cQ,c),
\end{equation}
where $N(n,\cQ,c)$ denotes the minimum number of $n$-cylinders of
the partition $\cQn$ necessary to cover a set of measure $c$.
\end{lemma}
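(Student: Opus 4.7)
The plan is to mirror Katok's proof of \eqref{eq.Katok} but using the Shannon--McMillan--Breiman theorem as the substitute for Brin--Katok's local entropy formula. Note that $N(n,\cQ,c)$ does not depend on $\vep$, so the outer $\lim_{\vep\to 0}$ in \eqref{eq.partition.number} is cosmetic and we need only show
\[
\lim_{n\to\infty}\frac{1}{n}\log N(n,\cQ,c)=h_\mu(f,\cQ).
\]

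First I would invoke Shannon--McMillan--Breiman: since $\mu$ is ergodic, for $\mu$-a.e.\ $x$ one has $-\frac1n\log\mu(\cQ^{(n)}(x))\to h_\mu(f,\cQ)$. Fix $\de>0$ and $\eta>0$ small enough that $1-\eta>c$ and $c-\eta>0$. By Egorov's theorem, pick $n_0$ and a measurable set $A$ with $\mu(A)>1-\eta$ on which the convergence is uniform, so that
\[
e^{-n(h_\mu(f,\cQ)+\de)}\le \mu(\cQ^{(n)}(x))\le e^{-n(h_\mu(f,\cQ)-\de)}
\]
for every $x\in A$ and $n\ge n_0$.

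For the upper bound, consider the collection of $n$-cylinders $\cQ^{(n)}(x)$ as $x$ ranges over $A$; these cover $A$, hence cover a set of $\mu$-measure greater than $1-\eta>c$. Each such cylinder has $\mu$-mass at least $e^{-n(h_\mu(f,\cQ)+\de)}$, so their number is at most $e^{n(h_\mu(f,\cQ)+\de)}$. Consequently
\[
\limsup_{n\to\infty}\frac{1}{n}\log N(n,\cQ,c)\le h_\mu(f,\cQ)+\de.
\]
For the lower bound, take any family $C_1,\dots,C_N$ of $n$-cylinders covering a set $B$ with $\mu(B)\ge c$. Then $\mu(B\cap A)\ge c-\eta>0$, and any $C_i$ meeting $A$ satisfies $\mu(C_i)\le e^{-n(h_\mu(f,\cQ)-\de)}$ (apply the uniform bound at any $x\in C_i\cap A$, using $C_i=\cQ^{(n)}(x)$). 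Hence
\[
c-\eta\le \mu(B\cap A)\le \sum_{i=1}^{N}\mu(C_i\cap A)\le N\,e^{-n(h_\mu(f,\cQ)-\de)},
\]
which yields $N\ge (c-\eta)\,e^{n(h_\mu(f,\cQ)-\de)}$. Taking the infimum over such covers and then $\liminf$ in $n$ gives
\[
\liminf_{n\to\infty}\frac{1}{n}\log N(n,\cQ,c)\ge h_\mu(f,\cQ)-\de.
\]
Letting $\de\to 0$ in both inequalities yields \eqref{eq.partition.number}.

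There is no real obstacle; the only subtle point is the uniformity obtained via Egorov, which is needed both so that cylinders meeting $A$ have a uniform lower mass bound (for the upper estimate on $N$) and a uniform upper mass bound (for the lower estimate on $N$). The argument is completely parallel to Katok's, with $n$-cylinders of $\cQ$ playing the role of $(n,\vep)$-dynamical balls and Shannon--McMillan--Breiman replacing Brin--Katok.
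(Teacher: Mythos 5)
Your proof is correct and is precisely the Egorov-plus-Shannon--McMillan--Breiman fleshing-out of Katok's covering argument that the paper invokes without detail. You also rightly observe that the outer $\lim_{\vep\to 0}$ in the statement is vacuous since $N(n,\cQ,c)$ has no $\vep$-dependence, and that the (implicit) ergodicity of $\mu$ is what makes the Shannon--McMillan--Breiman limit an a.e.\ constant.
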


\subsection{Combinatorial lemma}

In this subsection we prove the following covering lemma for
dynamical balls associated with points with slow recurrence to the
boundary of a given partition.

\begin{lemma}~\label{l.prelim1}
Let $\cQ$ be a finite partition of $X$ and consider $\vep>0$
arbitrary. Let $V_{\vep}$ denote the $\vep$-neighborhood of the
boundary $\partial\cQ$. For any $\al>0$ there exists $\gamma>0$
(depending only on $\alpha$) such that for every $x \in X$
satisfying $\sum_{j=0}^{n-1} 1_{V_{\vep}}(f^j(x)) < \gamma n$ the
dynamical ball $B(x,n,\vep)$ can be covered by $e^{\alpha n}$
cylinders of $\cQ^{(n)}$.
\end{lemma}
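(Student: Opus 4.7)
The idea is to code each $y\in B(x,n,\vep)$ by the sequence of $\cQ$-atoms visited by its orbit and then count the possible codes. Fix $x$ satisfying the hypothesis and call an index $j\in\{0,\dots,n-1\}$ \emph{good} when $f^j(x)\notin V_\vep$ and \emph{bad} otherwise; by the standing assumption the number of bad indices is strictly less than $\gamma n$.

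The heart of the argument is a forcing statement: for every $y\in B(x,n,\vep)$ and every good index $j$, the iterate $f^j(y)$ lies in the same atom of $\cQ$ as $f^j(x)$. Indeed, $j$ being good means $\dist(f^j(x),\partial\cQ)\ge\vep$, so the open $\vep$-ball around $f^j(x)$ is disjoint from $\partial\cQ$ and therefore contained in the (unique) atom whose interior contains $f^j(x)$; combined with $d(f^j(x),f^j(y))<\vep$ this forces $f^j(y)$ into that same atom.

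Hence any $\cQ^{(n)}$-cylinder meeting $B(x,n,\vep)$ has its good-coordinate entries pinned to the code of $x$ and is free only on the fewer than $\gamma n$ bad coordinates, each of which ranges over at most $\#\cQ$ atoms. This gives at most $(\#\cQ)^{\gamma n}$ cylinders covering the dynamical ball, and the choice $\gamma := \alpha/\log(\#\cQ)$ (the case $\#\cQ=1$ being trivial) turns this into the required bound $e^{\alpha n}$. I do not expect a real obstacle here: the argument is purely combinatorial and uses only the definitions of $V_\vep$ and of the dynamical ball, in line with the paper's stated philosophy. The one small point of care is the standard observation that a point at distance at least $\vep$ from $\partial\cQ$ sits in the interior of a single atom of $\cQ$, which is exactly what legitimises the forcing at good indices.
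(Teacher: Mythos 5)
Your proof is correct and follows essentially the same coding argument as the paper: good indices (where $f^j(x)\notin V_\vep$) pin the $\cQ$-atom of $f^j(y)$, and only the fewer than $\gamma n$ bad indices leave any freedom. In fact your count is slightly cleaner than the paper's: since $x$ is fixed, the set of bad indices is determined, so you correctly obtain the bound $(\#\cQ)^{\gamma n}$ directly, whereas the paper includes an extra (harmless but unnecessary) binomial factor $\binom{n}{\gamma n}$ for choosing which positions may differ, and then needs ``$\gamma$ small enough'' rather than your explicit $\gamma=\alpha/\log\#\cQ$.
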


\begin{proof}
Fix an arbitrary $\alpha>0$. Since $B(z,\vep) \subset \cQ(z)$ for
every $z\not\in V_\vep$, the itinerary of any point $y$ in the
dynamical ball $B(x,n,\vep)$ centered at a point $x \in X$
satisfying $\sum_{j=0}^{n-1} 1_{V_{\vep}}(f^j(x)) < \gamma n$ will
differ from the one of $x$ by at most $[\gamma n]$ choices of
partition elements. Since there are at most
 $
 \left(
\begin{array}{c}
n \\
 \gamma n
\end{array}
 \right)
 \; \; (\# \cQ)^{\gamma n}
 $
such choices, this can be made smaller than $e^{\alpha n}$ provided
that $\gamma>0$ is small enough. This completes the proof of the
lemma.
\end{proof}

\section{Dynamical balls and recurrence}\label{s.recurrence}

In this section our purpose is to prove Theorems~\ref{Thm} and
~\ref{thm} that relate entropy with the usual and minimal return
times to dynamical balls.

\subsection{Proof of Theorem~\ref{Thm}}\label{s.Proof}

We begin the proof of the theorem by noting that the limits in the
statement of Theorem~\ref{Thm} are indeed are well defined almost
everywhere. Given $n \ge 1$, $\vep>0$ and $x\in X$ it holds that
$R_n(x,\vep) \geq R_{n-1}(f(x),\vep)$. Indeed, $f^{R_n(x,\vep)}(x)
\in B(x,n,\vep)$ implies that $f^{R_n(x,\vep)}(f(x)) \in
f(B(x,n,\vep)) \subset B(f(x),n-1,\vep)$, which proves our claim.
Define
\begin{equation*}
\underline h(x,\vep)=\liminf_{n\to \infty} \frac1n \log R_n(x,\vep)
    \quad\text{and}\quad
\overline h(x,\vep)=\limsup_{n\to \infty} \frac1n \log R_n(x,\vep).
\end{equation*}
It follows from the discussion in the previous paragraph that
$\underline h(f(x),\vep) \leq \underline h(x,\vep)$ and $\overline
h(f(x),\vep) \leq \overline h(x,\vep)$. Since $\mu$ is ergodic these
functions are almost everywhere constant and their values will be
denoted by $\underline h(\vep)$ and $\overline h(\vep)$,
respectively. Denote by $\un h(f)$ and by $\ov h(f)$ the limits when
$\vep\to 0$ of the functions $\un h(\vep)$ and $\ov h(\vep)$. Such
limits do exist by monotonicity of the previous functions on $\vep$.
Hence, to prove the theorem it is enough to show that
\begin{equation}\label{eq.main.estimates}
\overline h(f) \le h_\mu(f) \le \underline h(f).
\end{equation}

To deal with the left hand side inequality in
\eqref{eq.main.estimates}, let $\vep>0$ be fixed and pick any
partition $\cQ$ satisfying $\mu(\partial\cQ)=0$ and
$\diam(\cQ)<\vep$. By construction we get that $B(x,n,\vep) \supset
\cQ_n(x)$ for $\mu$-almost every $x$ and every $n \ge 1$.
Consequently, $R_n(\cdot ,\cQ) \geq R_n(\cdot,\vep)$ and, using
Ornstein-Weiss's theorem,
$$
h_\mu(f) \geq h_{\mu}(f,\cQ)
        \geq \limsup_{n\to\infty} \frac1n \log R_n(x,\vep)
$$
for $\mu$-a.e. $x$. Since $\vep$ was chosen arbitrary one gets that
$h_\mu(f) \ge \ov h(f)$ as claimed.

We are left to prove the second inequality in
\eqref{eq.main.estimates}. Assume, by contradiction, that $h_\mu(f)
> \un h(f)$ and pick a finite partition $\cQ$ such that
$\mu(\partial \cQ)=0$ and $h_\mu(f)\geq h_\mu(f,\cQ) > b> a
> \underline h(f)$. Fix $0<\gamma<(b-a)/6$ small such that
Lemma~\ref{l.prelim1} holds for $\alpha=(b-a)/2$.
For every sufficiently small $\vep>0$, if $V_{\vep}$ denotes the
$\vep$-neighborhood of the boundary $\partial\cQ$ then
$\mu(V_{\vep}) < \gamma/2$. By ergodicity and Birkhoff's ergodic
theorem we may choose $N_0 \ge 1$ large such that the set
\begin{equation}\label{eq.An0}
 A= \Big\{x \in X : \sum_{j=0}^{n-1} 1_{V_{\vep}}(f^j(x)) < \gamma n, \forall n \ge N_0\Big\}
\end{equation}
has measure larger than $1-\gamma$. By Lemma~\ref{l.prelim1} each
dynamical ball $B(z,\ell,\vep)$ of length $\ell \ge N_0$ centered at
any point $z\in A$ can be covered by $e^{\alpha \,\ell}$ cylinders
of $\cQ^{(\ell)}$.
Furthermore, provided that $N_1\ge N_0$ is large enough, the measure
of the set
$$
 B=\Big\{x \in X : \exists N_0 \le n \le N_1 \;\text{s.t.}\; R_n(x,\vep) \le e^{a n} \Big\}
$$
is also larger than $1-\gamma$. For notational simplicity we shall
omit the dependence of the sets $A$ and $B$ on the integers $N_0$
and $N_1$. Using once more Birkhoff's ergodic theorem, we may take
$N_2 \ge 1$ large enough so that
$$
\Lambda
    = \Big\{x \in X : \sum_{j=0}^{k-1} 1_{A\cap B}(f^j(x)) > (1-3\gamma) k, \forall k \ge N_2\Big\}
$$
has measure at least $1/2$. We claim that there exists a constant
$C>0$ such that $\Lambda$ is covered by $C e^{bk}$ cylinders of
$\cQ^{(k)}$, for every large $k$. This will imply that
$$
h_\mu(f,\cQ)= \lim_{n\to\infty} \frac1n \log N(k,\cQ,1/2) < b,
$$
leading to a contradiction that will complete the proof of the
theorem.

Fix $x\in \Lambda$ and $k \gg N_2$. We proceed to divide the set
$\{0,1,2,\dots, k\}$ into blocks according to the recurrence
properties of the orbit of $x$. If $x \not\in A\cap B$ then we
consider the block $[0]$. Otherwise, we take the first integer $N_0
\leq m \leq N_1$ such that $R_m(x,\vep) \leq e^{a m}$ and consider
the block $[0,1,\dots, m-1]$. We proceed recursively and, if $\{1,
\dots, k'\}$ $(k'<k)$ is partitioned into blocks then the next block
is $[k'+1]$ if $f^{k'+1}(x) \not\in A\cap B$ and it will be $[k'+1,
k'+2, \dots, k'+m']$ if $f^{k'+1}(x)\in A\cap B$ and $m'$ is the
first integer in $[N_0,N_1]$ such that $R_{m'}(f^{k'+1}(x),\vep)
\leq e^{am'}$. This process will finish after a finite number of
steps and partitions $\{1, 2, \dots, k\}$ according to the
recurrence properties of the iterates of $x$, except possibly the
last block which has size at most $N_1$. We write the list of
sequence of block lengths determined above as
$\iota(x)=[m_1,m_2,\dots, m_{i(x)}]$. By construction there are at
most $3\gamma k$ blocks of size one.
This enable us to give an upper bound on the number of $k$-cylinders
$\cQ^{(k)}$ necessary to cover $\Lambda$. First note that since each
$m_i$ is either one or larger than $N_0$ then there are at most
$k/N_0$ blocks of size larger than $N_0$. Hence there are at most
$$
\sum_{j \le 3\gamma k}
 \left(\begin{array}{c}
    \frac{k}{N_0} +3\gamma k \\ j
 \end{array}\right)
    \le 3\gamma k
    \left(\begin{array}{c}
    \frac{k}{N_0} +3\gamma k \\ 3\gamma k
 \end{array}\right)
$$
possibilities to arrange the blocks of size one. Now, we give an
estimate on the number of possible combinatorics for every prefixed
configuration $\iota=[m_1, m_2, \dots, m_\ell]$, satisfying $\sum
m_j=k$ and $\#\{ j : m_j =1\} <3\gamma k$. This will be done fixing
elements from the right to the left. Define $M_j=\sum_{i\le j} m_j$.
If $x \in \Lambda$ is such that $\iota(x)=\iota$ there are at most
$\#\cQ$ possibilities to choose a symbol for each block of size one.
Moreover, if $1 \le \kappa \le \ell$ is the first integer such that
$\sum_{j=\kappa+1}^\ell m_i < N_1+e^{a N_1}$ then there are at most
$(\#\cQ)^{(\ell-\kappa)N_1} \leq (\#\cQ)^{N_1(1+N_1+e^{a N_1})}$
possibilities for choices of $(m_\kappa+m_{\kappa+1}+ \dots
+m_\ell)$-cylinders with combinatorics $[m_\kappa, \dots, m_\ell]$.
Recall that $R_{m_{k-1}}(f^{M_{\kappa-2}}(x), \vep) \leq e^{a
m_{k-1}} \leq e^{a N_1}$ and, by Lemma~\ref{l.prelim1}, the
dynamical ball $B(f^{M_{\kappa-2}}(x),m_{\kappa-1},\vep)$ is
contained in at most $e^{\alpha m_{\kappa-1}}$ cylinders in
$\cQ^{(m_{\kappa-1})}$. Hence the possible itineraries for the
$m_{\kappa-1}$ iterates $\{f^{M_{\kappa-2}}(x), \dots,
f^{M_{\kappa-1}}(x)\}$ may be chosen among $e^{\alpha m_{\kappa-1}}$
options corresponding to each of the $e^{a m_{\kappa-1}}$ previously
possibly distinct and fixed  blocks of size $m_{\kappa-1}$ in
$[m_\kappa, \dots, m_\ell]$. This shows that there are at most
$e^{(a+\alpha)m_{\kappa-1}}$ possible itineraries for the
$m_{\kappa-1}$ iterations of $f^{M_{\kappa-2}}(x)$. Proceeding
recursively for $m_{\kappa-2}, \dots, m_2,m_1$ we conclude, after
some finite number of steps, that there exists $C>0$ (depending only
on $N_1$) such that if $\gamma$ was chosen small then $\Lambda$ can
be covered by
$$
 3\ga k \left(\begin{array}{c}
 \frac{k}{N_0} +3\gamma k \\ 3\gamma k
 \end{array}\right)
    (\#\cQ)^{N_1(1+N_1+e^{aN_1})}
        (\#\cQ)^{3\gamma k}
            e^{(a+\alpha)k}
 \leq C e^{bk}
$$
cylinders in $\cQ^{(k)}$. This proves the claim and finishes the
proof of the theorem.

\subsection{Proof of Theorem~\ref{thm}}\label{s.proof}

The proof of the theorem is divided in two steps. On the one hand,
the specification property guarantees that for every small $\vep>0$
there exists an integer $N(\vep)\ge 1$ such that for any $x\in X$
and $n \ge N(\vep)$ there is some periodic point of period smaller
or equal to $n+N(\vep)$ in $B(x,n,\vep)$. Consequently,
 $
\limsup_{n\to\infty} \frac1n S_n(x,\vep)
    \le 1
 $
for every small $\vep>0$, and proves that $\ov S(x) \le 1$ almost
everywhere.

So, to prove the theorem it remains to show that $\un S(x) \ge 1$
for $\mu$-almost every $x$. We claim that for any $\eta<1$ there
exists a measurable set $E_\eta$ such that $\mu(E_\eta)>1-\eta$ and
$\mu( x \in E_\eta : S_n(x,\vep) \leq \eta \,n )$ is summable for
every small $\vep$. Using Borel-Cantelli lemma it will follow that
any point $x \in E_\eta$ satisfies $S_n(x,\vep)
> \eta \,n$ for all but finitely many values of $n$ and every small
$\vep$. The result will follow from the arbitrariness of $\eta$.
The remaining of this paragraph is devoted to the proof of the
previous claim. Let $\eta \in (0,1)$ be arbitrary and fix a small
$0<\al<\frac13 (1-\eta)h_{\mu}(f)$. Consider a finite partition
$\cQ$ satisfying $\mu(\partial \cQ)=0$ and $3\al<(1-\eta)h$, where
$h=h_\mu(f,\cQ)>0$. If $\vep_0>0$ is small enough then
$\mu(V_\vep)<\ga/2$ for every $0<\vep<\vep_0$, for $\ga=\ga(\al)>0$
given by Lemma~\ref{l.prelim1}. Using Birkhoff's ergodic theorem,
Shannon-McMillan-Breiman's theorem and Lemma~\ref{l.prelim1}, for
almost every $x$ there exists an integer $N(x) \ge 1$ such that for
every $n \ge N(x)$
\begin{equation}\label{eq.measure}
\sum_{j=0}^{n-1} 1_{V_\vep}(f^j(x)) <\ga n
    \quad\text{and}\quad
e^{-(h+\al) n} \le
    \mu(\cQ^{(n)}(x))
\le e^{-(h-\al) n}
\end{equation}
and, consequently, any dynamical ball $B(x,n,\vep)$ is covered by a
collection $\cQ^{(n)}(x,\vep)$ of $e^{\al n}$ cylinders of the
partition $\cQ^{(n)}$. Pick $N \ge 1$ large such that set $E_\eta$
of points $x\in X$ satisfying \eqref{eq.measure} for every $n \ge N$
has measure greater than $1-\eta$. Since $\cQ$ is finite there is
$K>0$ such that
\begin{equation*}
K^{-1}e^{-(h+\al) n} \le
    \mu(\cQ^{(n)}(x))
\le K e^{-(h-\al) n}
\end{equation*}
for every $x\in E_\eta$ and every $n \ge 1$.
For $n \ge N$ we denote by $E_\eta(n,k)$ the set of points in $
E_\eta$ such that $S_n(\cdot,\vep)=k$. If $x\in E_\eta(n,k)$ then
the dynamical ball $B(x,n,\vep)$ is contained in the subcollection
of cylinders $Q_n \in \cQn(x,\vep)$ whose iteration by $f^k$
intersects any of the $n$-cylinders of $\cQn(x,\vep)$. Any such
cylinder $Q_n$ is determined by its first $k$ symbols and by the at
most $e^{\al n}$ possible strings following them. So, the number of
those cylinders is bounded by $e^{\al n}$ times the number of
cylinders in $\cQ^{(k)}$ that intersect $E_\eta$, that is, $e^{\al
n} K e^{(h+\al)k}$. Hence, if $n \ge N$
\begin{align*}
\mu\Big( x \in E_\eta : S_n(x,\vep) < \eta n\Big)
    & \le \sum_{k=0}^{\eta n} \sum_{\substack{Q_n \in \cQn \\ Q_n \cap E_\eta(n,k) \neq \emptyset}}
        \mu(Q_n) \le K \eta n \; e^{-(h-2\al) n} e^{(h+\al) \eta n },
\end{align*}
which is summable because $(h-2\al)-\eta(h+\al)>(1-\eta)h-3\al>0$.
This proves our claim and completes the proof of Theorem~\ref{thm}.

\section{Local recurrences and applications to dimension theory}\label{s.applications}

This section is devoted to the proof of
Proposition~\ref{p.application1} and Theorem~\ref{thm.application}.

\subsection{Proof of Proposition~\ref{p.application1}}

Our assumptions guarantee that $B(x,\vep\La^{-n}) \subset
B(x,n,\vep) \subset B(x,\vep \la^{-n})$ for every $x \in X$, $n \ge
1$ and every small $\vep>0$. Hence
$$
\underline R(x)
    \ge \liminf_{\vep\to 0}
        \Big[\lim_{n\to\infty} \frac{\log R_{B(x,\La^{-n}\vep)}(x)}{-\log (\vep\La^{-n})}\Big]
    \ge \frac{h_\mu(f)}{\log\La}
$$
for $\mu$-almost every $x$, using Theorem~\ref{Thm}. The proof of
the inequality $\ov R(x) \le h_\mu(f)/\log\la$ is analogous.
Moreover, using the specification property and Theorem~\ref{thm} it
also follows similarly that $1/\log\La \le \underline S(x)$ and $\ov
S(x) \le 1/\log\la$ in a set of total probability. This finishes the
proof of the proposition.

\subsection{Proof of Theorem~\ref{thm.application}}

We make use of Pesin's local charts (see e.g.
\cite[Appendix]{KH95}). Given $\eta>0$, for $\mu$-almost every $x$
there exists $q_\eta(x)\ge 1$ and an embedding $\Phi_x$ of the
neighborhood $R_x\subset \mathbb R^d$ of size $1/q_\eta(x)$ around
$0$ onto a neighborhood $U_x\subset M$ of $x$ such that:
\begin{enumerate}
\item $e^{-\eta} q_\eta(x) \le q_\eta(f(x)) \le e^{\eta} q_\eta(x)$;
\item $C^{-1}d(\Phi_x(z),\Phi_x(z')) \le |z-z'| \le q_\eta(x)\, d(\Phi_x(z),\Phi_x(z'))$
    for every $z,z' \in R_x$, for some universal constant $C$;
\item the map $f_x=\Phi_{f(x)}^{-1} \circ f \circ \Phi_{x}$
    satisfies
    \begin{itemize}
    \item[(a)] $e^{\la_1-\eta}|v| \leq |Df_x(0)v| \leq e^{\la_d+\eta}|v|, \; \forall v\in T_x M$, and
    \item[(b)] $\text{Lip}(f_x-Df_x(0))<\eta$.
    \end{itemize}
\end{enumerate}
We claim that the dynamical ball $B(x,n,\vep)$ contains the ball of
radius $r_n(x,\vep)=\vep e^{-(\la_d+3\eta)n}/ (C q_\eta(x)^{2})$
centered at $x$ for every small $\vep$.
Given $x\in M$ set $\hat x=\Phi_x^{-1}(x)$ and $\hat
f^k_x=f_{f^k(x)}\circ \dots \circ f_{f(x)} \circ f_{x}$. Indeed, if
$d(x,y)<r_n(x,\vep)$ then $\hat y=\Phi_x^{-1}(y) \in R_x$ and $|\hat
f_x(\hat x)-\hat f_x(\hat y)| \le e^{\la_d+2\eta} |\hat x-\hat y|
\ll 1/q_\eta(f(x))$. Recursively, we get that $\hat f^k_x(\hat y)
\in R_{f^k(x)}$ and
$$
d(f^k(x),f^k(y)) \le C |\hat{f}_x^k(\hat x)-\hat{f}_x^k(\hat y)|
                \le \frac{\vep}{q_\eta(f^k(x))} e^{(\la_d+3\eta)(k-n)}
                <\vep
$$
for every $0\le k \le n$, which proves our claim. Since
 $
 R_n(x,\vep)
            \le R_{B(x, r_n(x,\vep))}(x)
 $
 and
 $
 S_n(x,\vep)
            \le S(B(x, r_n(x,\vep)))
 $
for every $n$ and every $\vep$, using Theorems~\ref{Thm} and
~\ref{thm} we conclude that
$$
\underline R(x)
    =\liminf_{\vep\to 0} \frac{\log R_{B(x, r_n(x,\vep))}(x)}{-\log r_n(x,\vep)}
    \ge \liminf_{\vep\to 0}
        \Big[\lim_{n\to\infty} \frac{\log R_n(x,\vep)}{-\log r_n(x,\vep)}\Big]
    \ge \frac{h_\mu(f)}{\la_d+3\eta}.
$$
and, analogously, $\underline S(x)\ge \frac{1}{\la_d+3\eta}.$ Hence
$\underline R(x) \ge h_\mu(f)/\la_d$ and $\underline S(x) \ge
1/\la_d$, because $\eta$ was chosen arbitrary.

On the other hand, if one assumes that for almost every $x$ and
every $n$ there exists a radius $\vep_0(x,n)>0$ so that the
dynamical ball $B(x,n,\vep)$ is contained in the ball of radius
$r_n(x,\vep)=C\vep q_\eta(x)^{-1} e^{-(\la_1-3\eta)n}$ around $x$
for every $0<\vep<\vep_0$ then
$$
\ov R(x)
    =\limsup_{\vep\to 0} \frac{\log R_{B(x, r_n(x,\vep))}(x)}{-\log r_n(x,\vep)}
    \le \limsup_{\vep\to 0}
        \Big[\lim_{n\to\infty} \frac{R_n(x,\vep)}{-\log r_n(x,\vep)}\Big]
    \le \frac{h_\mu(f)}{\la_1-3\eta}
$$
and, similarly, $\ov S(x) \le 1/(\la_1-3\eta)$. So, the result will
follow by arbitrariness of $\eta$. To prove the previous claim note
that if $\vep_0(x,n)=e^{-(\la_d+3\eta)n}/ (C q_\eta(x)^{2})$ then
any $y\in B(x,n,\vep)$ satisfies $\hat f^k_x(\hat y) \in R_{f^k(x)}$
for every $0\le k \le n$ and $0<\vep<\vep_0$. Moreover,
$$
d(x,y) 
                \le C e^{-(\la_1-2\eta)n} |\hat{f}_x^n(\hat x)-\hat{f}_x^n(\hat y)|
                <C\vep q_\eta(x)^{-1} e^{-(\la_1-3\eta)n}.
$$
This proves the claim and finishes the proof of the theorem.

\vspace{.5cm}
\medskip \textbf{Acknowledgements:}
Part of this work was done during the International Conference in
Honor of Michael Misiurewicz - Bedlewo, and the School and Workshop
on Dynamical Systems - Trieste. The author is thankful to IMPAN and
ICTP for providing excellent research conditions, to V. Ara\'ujo and
A. Castro for encouragement and to J.-R. Chazottes for useful
remarks to a previous version of the paper. This work was partially
supported by FAPERJ and CNPq. 


\bibliographystyle{alpha}

\end{document}